\newtheorem{theorem}{Theorem}[section]
\newtheorem{lemma}[theorem]{Lemma}
\theoremstyle{remark}
\newtheorem{remark}[theorem]{\bf {Remark}}
\numberwithin{equation}{section}
\DeclareMathOperator{\Pf}{Pf}
\DeclareMathOperator{\TPf}{TPf}
\newcommand\bR{{\mathbb R}}
\newcommand{\ra}{\rightarrow}
\begin{document}
\title{ A short proof of a Theorem by Hopf}
\begin{abstract} A   proof based on the Chern-Gauss-Bonnet Theorem is given to Hopf Theorem concerning the degree of the Gauss map of a hypersurface in $\bR^n$. 
\end{abstract}

\author{Daniel Cibotaru}
\address{Universidade Federal do Cear\'a, Fortaleza, CE, Brasil}
\email{daniel@mat.ufc.br} 

\subjclass[2010]{Primary 55M25; Secondary 53C05, 57N35.} 
\maketitle

\section{Introduction}

Let $H\subset \bR^n$ be a smooth,  compact,  embedded hypersurface without boundary. It is well-known that $H$ separates Euclidean space into two connected components. Let $W$ be the bounded component of $\bR^n$ with $\partial W=H$. Orient $H$ as the boundary of $W$ using the outer normal first convention. Hopf \cite{H} proved the following:
\begin{theorem}[Hopf] \label{HT} The degree of the Gauss map $\mathscr{G}_{H}:H\ra S^{n-1}$ is equal to $\frac{\chi(H)}{2}$ when $n$ is odd and to $\chi(W)$ when $n$ is even.
\end{theorem}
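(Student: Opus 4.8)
The plan is to derive Hopf's theorem directly from the Chern--Gauss--Bonnet theorem, exploiting that $H$ and $W$ are submanifolds of the \textbf{flat} space $\bR^n$. Write $\nu=\mathscr{G}_{H}:H\to S^{n-1}$ for the outer unit normal and let $S=d\nu$ be the associated shape operator, a self-adjoint endomorphism of $TH$ for the induced metric. Flatness of the ambient metric has two consequences. First, by the Gauss equation the intrinsic curvature of $H$ is entirely encoded by $S$: in an orthonormal coframe $\theta^1,\dots,\theta^{n-1}$ of $H$, the curvature $2$-forms of the induced Levi--Civita connection are $\Omega^H_{ij}=\omega_i\wedge\omega_j$ with $\omega_i=\sum_j S_{ij}\,\theta^j$. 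Second, because the orientations of $T_qH$ and of $T_{\nu(q)}S^{n-1}$ both arise from ``outer normal first'' and $\nu(q)$ is the common outer normal, the Jacobian of $\nu$ equals the Gauss--Kronecker curvature $K=\det S$, so $\nu^{*}(dA_{S^{n-1}})=K\,dA_H$ and
\[
\deg\mathscr{G}_{H}=\frac{1}{\Vol(S^{n-1})}\int_{H}K\,dA_H .
\]
It thus remains only to evaluate $\int_H K\,dA_H$ topologically, which is exactly what Chern--Gauss--Bonnet does. I also observe that the two cases of the theorem unify: when $n$ is odd the double $W\cup_H W$ is a closed odd-dimensional manifold, so $0=2\chi(W)-\chi(H)$, i.e. $\chi(H)=2\chi(W)$; hence in either parity the claim reads $\deg\mathscr{G}_{H}=\chi(W)$.

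For $n$ odd, write $\dim H=2m$ with $m=\tfrac{n-1}{2}$; since $H$ is closed and even-dimensional, Chern--Gauss--Bonnet applies to $H$ itself and gives $\chi(H)=\frac{1}{(2\pi)^m}\int_H\Pf(\Omega^H)$. Substituting $\Omega^H_{ij}=\omega_i\wedge\omega_j$ into the Pfaffian and reordering the one-forms $\omega_i$ yields the algebraic identity $\Pf(\Omega^H)=\frac{(2m)!}{2^m m!}\,\omega_1\wedge\cdots\wedge\omega_{2m}=\frac{(2m)!}{2^m m!}\,K\,dA_H$. Combining this with the displayed formula for $\deg\mathscr{G}_{H}$ and with $\Vol(S^{2m})=\frac{2^{2m+1}\pi^m m!}{(2m)!}$ gives
\[
\chi(H)=\frac{(2m)!}{2^m m!\,(2\pi)^m}\int_H K\,dA_H=\frac{(2m)!\,\Vol(S^{2m})}{2^m m!\,(2\pi)^m}\,\deg\mathscr{G}_{H}=2\,\deg\mathscr{G}_{H},
\]
which is Hopf's formula for odd $n$.

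For $n$ even, $\dim H=n-1$ is odd, so $\chi(H)=0$ and one must work with $W$ instead. I would apply the Chern--Gauss--Bonnet theorem for manifolds with boundary (Allendoerfer--Weil) to the compact $n$-manifold $W$ with its flat induced metric,
\[
\chi(W)=\frac{1}{(2\pi)^{n/2}}\int_W\Pf(\Omega^W)+\frac{1}{(2\pi)^{n/2}}\int_H\Pi ,
\]
where $\Pi$ is the universal boundary transgression form built from the curvature $\Omega^W$ and the second fundamental form of $H\subset W$. Flatness forces $\Omega^W\equiv0$, so the interior integral vanishes and $\Pi$ collapses to its curvature-free term, which is the top exterior power of the second-fundamental-form one-forms and hence a universal multiple of $\nu^{*}(dA_{S^{n-1}})$; evaluating the multiple on the model $W=B^n$ (where $\chi=1$ and $K\equiv1$) identifies it as $\Vol(S^{n-1})^{-1}$. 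This gives $\chi(W)=\Vol(S^{n-1})^{-1}\int_H\nu^{*}(dA_{S^{n-1}})=\deg\mathscr{G}_{H}$, as required. (Alternatively, one may extend $\nu$ to a vector field on $W$ with no interior zeros and no boundary zeros and combine the Poincar\'e--Hopf theorem for manifolds with boundary with a Stokes/degree computation to reach the same conclusion.)

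The conceptual core in each case is the passage from curvature to Gauss-map data: the Gauss equation together with the Pfaffian identity when $n$ is odd, and the explicit form of the Allendoerfer--Weil boundary integrand in the flat regime when $n$ is even. I do not expect any one of these identities to be the real difficulty; the delicate point is the orientation and normalization bookkeeping that links them --- the boundary orientation on $H=\partial W$, the sign convention for $S$, and the mutually compatible normalizations of the Euler form, of the boundary term, and of $\Vol(S^{n-1})$ --- all of which are pinned down unambiguously, and correctly, by testing against the single model case of a round sphere bounding a ball.
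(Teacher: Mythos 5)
Your proof is correct, and it rests on the same two pillars as the paper's: the closed Chern--Gauss--Bonnet theorem applied to $H$ when $n$ is odd, and the boundary version applied to the flat manifold $W$ when $n$ is even, with the normalizing constant fixed by testing on the ball. Where you differ is in the bookkeeping that converts curvature data into Gauss-map data. The paper's key lemma is that $\mathscr{G}_H^*\nabla^{S^{n-1}}=\nabla^H$, so the Pfaffian of the round metric (odd case) and the transgression form $\TPf(\overline\nabla,\hat{\nabla})$ (even case) pull back on the nose; the constants $2$ and $-1$ then come for free from applying Chern--Gauss--Bonnet to $S^{n-1}$ and to $D^n$, and no explicit curvature identity or volume formula is ever needed. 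You instead work with the classical Gauss--Kronecker integrand: the Gauss equation together with the Pfaffian reordering identity in the odd case (your constants check out, including $\Vol(S^{2m})$), and the collapse of the Allendoerfer--Weil boundary integrand to its curvature-free term in the even case. That last step is the one place where you assert more than you prove --- the claim that in the flat regime the boundary form reduces to a universal multiple of $\det S\, dA_H$ requires knowing the explicit structure of Chern's transgression as a sum of terms each carrying some number of curvature factors --- but this is a standard fact, and your model-case normalization then closes the argument. Your observation that doubling $W$ gives $\chi(H)=2\chi(W)$ for odd $n$, so that both cases read $\deg\mathscr{G}_H=\chi(W)$, is a pleasant addition not present in the paper.
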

In the classical textbook \cite{GP},  Guillemin and Pollack show that the case of odd $n$   is a consequence of Poincar\'e-Hopf Theorem. Using  Morse Theory, Sakkalis \cite{S} gave a particularly  simple proof covering both cases of Hopf's Theorem \ref{HT}. In this note, we give another short proof, using the Chern-Gauss-Bonnet Theorem and its two versions: for manifolds with and without boundary \cite{Ch1,Ch2}.   While Guillemin and Pollack use  Hopf's  Theorem   to prove Chern-Gauss-Bonnet  for even-dimensional hypersurfaces,  we explain why we believe it might be as  natural to go the other way around and consider Theorem \ref{HT} to be an application of this other famous result.

One can look at Chern-Gauss-Bonnet as a combination of two fundamental facts in  topology:
\begin{itemize}
\item[(a)] a Poincar\'e Duality statement which says that the Pfaffian of the Levi-Civita connection (properly normalized) is Poincar\'e dual to the degree $0$ current induced by a generic vector field with only isolated stationary points; in simple terms the later is represented by a bunch of points with appropriate signs attached;  
\item[(b)] the homotopy invariance of the intersection number which takes the form of the Poincar\'e-Hopf Theorem, masterly explained in \cite{GP}; admitedly one needs also a particular construction  of a nice vector field in order to identify the index with the Euler characteristic.
\end{itemize}
Without entering into  historical details we will say that the first intrinsic proof of the generalization to the classical Gauss-Bonnet Theorem is due to Chern in \cite{Ch1}. This is where the transgression class (see below) first appears. A year later, Chern generalized his result to the case of a  manifold with boundary in \cite{Ch2}.
   
In regard to item (a), we recommend a fairly elementary treatment of a  general version of Chern-Gauss-Bonnet for oriented vector bundles of even rank in \cite{Ni}. For a direct proof, based on a general transgression formula see \cite{Ci}. We remark that, notwithstanding the difficulty of any complete proof,  the Chern-Gauss-Bonnet Theorem is not fundamentally based on Theorem \ref{HT} but is indeed a manifestation of Poincar\'e Duality. On the other hand, the latter is a direct consequence of the former as we will see next.

\section{The proof}
We use the notation of the previous section. The following is well-known to geometers. 
\begin{lemma} The pull-back via the Gauss map $\mathscr{G}_H:H\ra S^{n-1}$ of the Levi-Civita connection $\nabla^{S^{n-1}}$ on $S^{n-1}$ endowed with the round metric  is the Levi-Civita connection $\nabla^{H}$ on $H$.
\end{lemma}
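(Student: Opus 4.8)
The plan is to exhibit both connections as the tangential part of the flat connection on the trivial bundle $\underline{\bR^n}=H\times\bR^n$, exploiting the fact that $TH$ and $\mathscr{G}_H^*TS^{n-1}$ are literally the same subbundle of $\underline{\bR^n}$.

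First I would record the elementary observation underlying everything. Writing $\nu(p)\in S^{n-1}$ for the value of the Gauss map at $p\in H$ (the outer unit normal), the tangent space $T_pH$ is the orthogonal complement $\nu(p)^{\perp}\subset\bR^n$, and likewise $T_{\nu(p)}S^{n-1}=\nu(p)^{\perp}$. Hence, as subbundles of $\underline{\bR^n}$ over $H$, there is a canonical equality $\mathscr{G}_H^*TS^{n-1}=TH$ (this uses the ambient inclusions, \emph{not} $d\mathscr{G}_H$, which need not be an isomorphism). A section of either bundle is then just a smooth map $Y\colon H\to\bR^n$ with $Y(p)\perp\nu(p)$ for all $p$.

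Next I would invoke the standard description of the Levi-Civita connection of a Riemannian submanifold $M\subset\bR^N$: $\nabla^M_XY=\pi_{T_pM}(D_XY)$, where $D$ is the flat derivative (the ordinary directional derivative of the $\bR^N$-valued function $Y$) and $\pi_{T_pM}$ the orthogonal projection. Applied to $M=H$ this gives $\nabla^H_XY=\pi_{\nu(p)^{\perp}}(D_XY)$. The one step that needs a short argument is the corresponding formula for the pulled-back connection: for a smooth map $f\colon M\to N$ with $N\subset\bR^N$ and $Y$ a section of $f^*TN$ (an $\bR^N$-valued function on $M$ with $Y(p)\in T_{f(p)}N$), one has $(f^*\nabla^N)_XY=\pi_{T_{f(p)}N}(D_XY)$ for $X\in T_pM$. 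I would verify this by the usual local computation: pick a local frame $(e_i)$ of $TN$, write $Y=\sum_i\varphi_i\,(e_i\circ f)$, expand $(f^*\nabla^N)_XY$ using the defining property of the pullback connection together with $\nabla^N_{df(X)}e_i=\pi(D_{df(X)}e_i)=\pi\bigl(D_X(e_i\circ f)\bigr)$, and compare with $\pi(D_XY)$, using that $\pi(e_i\circ f)=e_i\circ f$ because $e_i$ is tangent to $N$.

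Taking $f=\mathscr{G}_H$ and $N=S^{n-1}$ then yields $(\mathscr{G}_H^*\nabla^{S^{n-1}})_XY=\pi_{T_{\nu(p)}S^{n-1}}(D_XY)=\pi_{\nu(p)^{\perp}}(D_XY)=\nabla^H_XY$ under the identification of the first paragraph, which is exactly the claim. Alternatively, once one knows $(\mathscr{G}_H^*\nabla^{S^{n-1}})_XY=\pi_{\nu(p)^{\perp}}(D_XY)$, one can finish by uniqueness of the Levi-Civita connection: this connection is torsion-free and compatible with $g_H$, since $g_H$ and the projection $\pi_{\nu(p)^{\perp}}$ are both built from the Euclidean inner product on $\nu(p)^{\perp}$. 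The only thing to be careful about — and the closest thing to an obstacle — is pure bookkeeping: making precise which identification $\mathscr{G}_H^*TS^{n-1}\cong TH$ is meant, and resisting the temptation to use $d\mathscr{G}_H$ (the shape operator) for it, since the statement must hold even where the Gauss map fails to be an immersion.
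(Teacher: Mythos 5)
Your proposal is correct and follows essentially the same route as the paper: both identify $\mathscr{G}_H^*TS^{n-1}$ with $TH$ canonically (as $\nu(p)^{\perp}\subset\bR^n$) and show that the pullback connection is the tangential projection of the flat derivative, which is $\nabla^H$. The only cosmetic difference is that the paper computes directly on pullback sections $X\circ\mathscr{G}_H$ and simply cites that these determine the pullback connection, whereas you spell out that step via a local frame expansion.
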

\begin{proof} First of all, the pull-back (via the Gauss map) of the tangent bundle to the sphere is canonically  isomorphic to the tangent bundle to $H$. Let now $X:S^{n-1}\ra TS^{n-1}$ be a vector field on the sphere. Then at a point $p\in H$
\[\mathscr{G}_H^*\nabla^{S^{n-1}}_{Y_p}(X\circ \mathscr{G}_H):=\nabla^{S^{n-1}}_{d_p\mathscr{G}_H(Y_p)}X=P_{T_{\mathscr{G}_H(p)}S^{n-1}}dX(d_p\mathscr{G}_H(Y_p))=\]\[=P_{T_pH}d_p(X\circ \mathscr{G}_H)(Y_p)=\nabla_{Y_p}^H(X\circ \mathscr{G}_H).
\]
The notation $P_V$ represents orthogonal projection onto the linear subspace $V$. The pull-back connection is uniquely determined by its action on "pull-back" sections, i.e. sections of type $\widetilde{X}= X\circ \mathscr{G}_H$ and therefore we conclude that $\mathscr{G}_H^*\nabla^{S^{n-1}}=\nabla^H.$
\end{proof}
\begin{remark} This result  is a particular case of the celebrated Narasimhan Ramanan Theorem \cite{NR} which says that every pair vector bundle plus connection over a compact manifold is isomorphic with the pull-back of a universal bundle plus  universal connection via a classifying map. Indeed this holds even more generally for principal bundles with connections. In the Lemma above the sphere plays the role of the base space of the universal bundle and the Gauss map of a hypersurface plays the role of the classifying map. 
\end{remark}

In order to compute the degree of the Gauss map when $n$ is odd, instead of using the volume form on $S^n$ we use the Pfaffian of the Levi-Civita connection $\nabla^{S^{n-1}}$. By the Gauss-Bonnet Theorem the integral over the sphere of the Pfaffian $\Pf(\nabla^{S^{n-1}})$ (properly normalized) equals $2$. Hence the degree of the Gauss map is
\begin{equation}\label{parta} \deg{\mathscr{G}_{H}}=\frac{1}{2}\int_{H}\mathscr{G}_{H}^*\Pf(\nabla^{S^{n-1}})=\frac{1}{2}\int_{H}\Pf(\nabla^{H})=\frac{\chi(H)}{2}.
\end{equation}
where  we used Chern-Gauss-Bonnet Theorem on $H$ in the last equality. 
\vspace{0.5cm}

When $n$ is even and hence $S^{n-1}$ is odd-dimensional, the Pfaffian of $\nabla^{S^{n-1}}$ is zero by definition. However, there exists another natural form built with the help of the Levi-Civita connection that plays the role of the Pfaffian. This is a so-called transgression form. It can be described as follows. The trivial bundle $\bR^{n}\bigr|_{S^{n-1}}$ splits orthogonally as a direct sum of the normal bundle $\tau\ra S^{n-1}$ and $TS^{n-1}\ra S^{n-1}$. It can therefore be endowed with two connections: the trivial one $d$ and the direct sum connection $d\oplus \nabla^{S^{n-1}}$. 

The Pfaffians of both $\overline{\nabla}:=d$ and $\hat{\nabla}:=d\oplus \nabla^{S^{n-1}}$ are obviously identically zero as they are of degree $n>\dim{S^{n-1}}$.   However, a standard result of the theory of characteristic classes (see for example the article by Chern and Simons, \cite{CS}) says that the difference of the two Pfaffians is equal to an exact form,  which can be explicitly described. We write:
\[ 0=\Pf(d)-\Pf(d\oplus \nabla^{S^{n-1}})=d\TPf(\overline\nabla,\hat{\nabla})
\]
where 
\begin{equation}\label{transclass} \TPf(\overline\nabla,\hat{\nabla})=\int_{[0,1]}\Pf(\widetilde{\nabla}).
\end{equation}
Above $\widetilde{\nabla}:=\frac{d}{dt}+(1-t)\hat{\nabla}+t\overline{\nabla}$ is a  connection on the trivial bundle $\underline{\bR^{n}}\ra [0,1]\times S^{n-1}$ resulting by taking the affine combination of $\overline{\nabla}$ and $\hat{\nabla}$. The operator $\frac{d}{dt}$ signals that the covariant derivative of $\widetilde{\nabla}$ in the $\frac{\partial}{\partial t}$ direction is just derivation with respect to this field.  Finally, the integral is really integration of a form over the fiber of  the projection 
\[ \pi:[0,1]\times S^{n-1}\ra S^{n-1}.\]
 Before one starts believing that we just wrote $0$ in a complicated manner we show the following.
\begin{lemma}For $n$ even the folowing holds: \[\int_{S^{n-1}}\TPf(\overline\nabla,\hat{\nabla})=-1.\]
\end{lemma}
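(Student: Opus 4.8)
The plan is to read off the value of $\int_{S^{n-1}}\TPf(\overline\nabla,\hat\nabla)$ from the Chern--Gauss--Bonnet theorem \emph{with boundary} \cite{Ch2}, applied to the simplest manifold bounded by $S^{n-1}$: the closed unit ball $B^{n}\subset\bR^{n}$ with its Euclidean metric (legitimate since $n$ is even, so $\dim B^{n}$ is even). For a compact oriented Riemannian manifold $M$ of even dimension with boundary, that theorem can be phrased in the form
\[
\chi(M)=\int_{M}\Pf(\nabla^{M})-\int_{\partial M}\TPf\bigl(\nabla^{M}|_{\partial M},\ \nabla^{\partial M}\oplus\nabla^{\nu}\bigr),
\]
where $\nabla^{\nu}$ is the connection induced on the (rank one) normal bundle $\nu$ of $\partial M$ by $\nabla^{M}$ and the orthogonal splitting $TM|_{\partial M}=\nu\oplus T\partial M$; the boundary integrand is exactly the transgression form detecting the failure of $\nabla^{M}|_{\partial M}$ to respect that splitting (equivalently, the one built from the second fundamental form of $\partial M$ together with the ambient curvature). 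The point is that for $M=B^{n}$ all three connections are the ones in the statement: under the canonical trivialization $TB^{n}\cong\underline{\bR^{n}}$ the Levi--Civita connection of the flat ball is $\overline\nabla=d$, so $\nabla^{M}|_{\partial M}=\overline\nabla$; the normal bundle of $S^{n-1}$ is trivialized by the outer unit normal with flat connection, so $\nabla^{\nu}=d$; and the induced Levi--Civita connection on $S^{n-1}=\partial B^{n}$ is, by construction, $\nabla^{S^{n-1}}$. Hence $\nabla^{\partial M}\oplus\nabla^{\nu}=\hat\nabla$.

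The computation is then immediate: the Euclidean metric is flat, so $\Pf(\nabla^{B^{n}})=\Pf(\overline\nabla)=0$ and the interior term vanishes, while $B^{n}$ is contractible, so $\chi(B^{n})=1$. Substituting gives $1=0-\int_{S^{n-1}}\TPf(\overline\nabla,\hat\nabla)$, which is the assertion. Conceptually this is just the base case $H=S^{n-1}$, $W=B^{n}$ of the even half of Theorem \ref{HT}: there the Gauss map is $\id_{S^{n-1}}$, of degree $1=\chi(B^{n})$, and the Lemma is precisely the calibration identifying $-\TPf(\overline\nabla,\hat\nabla)$ with the normalized volume form of $S^{n-1}$ --- the substitute for the (vanishing) Pfaffian of $\nabla^{S^{n-1}}$ in the degree computation when $n$ is even.

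The only genuinely delicate point is sign bookkeeping: one must be sure that the boundary integrand in the version of Chern--Gauss--Bonnet being used is $-\TPf(\nabla^{M}|_{\partial M},\nabla^{\partial M}\oplus\nabla^{\nu})$, with this sign and this order of the two connection arguments (recall $\TPf$ is antisymmetric in them), and not its negative. I would pin the sign down once and for all in the elementary case $n=2$: there $\TPf(\overline\nabla,\hat\nabla)$ is a $1$--form on $S^{1}$, and a short computation with the orthonormal frame $(\nu,\tau)$ rotating along the circle gives $\TPf(\overline\nabla,\hat\nabla)=-\tfrac{1}{2\pi}\,d\theta$, hence $\int_{S^{1}}\TPf(\overline\nabla,\hat\nabla)=-1$ --- which is just the classical fact that the unit circle bounding the disk has total geodesic curvature $2\pi$. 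If one prefers to avoid quoting Chern's boundary formula at all, one can instead extend $\hat\nabla$ to a connection $\nabla'$ on $\underline{\bR^{n}}\to B^{n}$ that near $S^{n-1}$ equals the pullback of $\hat\nabla$ under the radial retraction $x\mapsto x/|x|$ and is patched smoothly near the origin; then $\Pf(\nabla')$ is supported near $0$, Stokes applied to $d\,\TPf(\overline\nabla,\nabla')=\Pf(\overline\nabla)-\Pf(\nabla')=-\Pf(\nabla')$ yields $\int_{S^{n-1}}\TPf(\overline\nabla,\hat\nabla)=-\int_{B^{n}}\Pf(\nabla')$, and the remaining integral is the Poincar\'e--Hopf index at $0$ of the radial field $x\mapsto x$, namely $+1$. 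Either way the whole content is the flatness of the Euclidean ball together with $\chi(B^{n})=1$.
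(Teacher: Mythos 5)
Your main argument is exactly the paper's proof: apply the Chern--Gauss--Bonnet theorem with boundary to the flat unit ball $D^{n}=B^{n}$, observe that $\Pf(\overline\nabla)=0$ and $\chi(B^{n})=1$, and read off the boundary term. The extra material (the $n=2$ sign check and the alternative route via a patched connection and Poincar\'e--Hopf) is a sensible way to address the sign convention, which the paper simply takes as given from \cite{Ch2}, but the core of the argument is identical.
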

\begin{proof} Apply Chern-Gauss-Bonnet Theorem \cite{Ch2} to the manifold with boundary $D^{n}\subset \bR^{n}$:
\[ 1=\chi(D^{n})=\int_{D^{n}}\Pf(\overline{\nabla})-\int_{S^{n-1}}\TPf(\overline\nabla,\hat{\nabla})=-\int_{S^{n-1}}\TPf(\overline\nabla,\hat{\nabla}).
\] 
\end{proof}
\begin{remark}
For a computational proof and also for a local description of the transgression class the reader can take a look at Lemma 8.3.18 in \cite{Ni}. This starts by noting that  that $\TPf(\overline\nabla,\hat{\nabla})$ is a constant multiple of the volume form of $S^{n-1}$ since it enjoys the same symmetry property as the volume form. To see which multiple exactly, one can relate the two quantities at a single preferred point on the sphere. 
\end{remark}

It is easily seen that the transgression class satisfies a naturality property similar with that of the Pfaffian. Namely:
\[ \mathscr{G}_H^*\TPf(\overline\nabla,\hat{\nabla})=\TPf(\mathscr{G}_H^*\overline\nabla,\mathscr{G}_H^*\hat{\nabla})=\TPf(\overline{\nabla}^H,\hat{\nabla}^H),
\]
where $\overline\nabla^H$ is the trivial connection on $\underline\bR^n\ra H$ and $\hat{\nabla}^H$ is the direct sum of the trivial connection and the Levi-Civita connection on $H$. We conclude that
\begin{equation}\label{partb} \deg{ \mathscr{G}_H}=-\int_{H}\mathscr{G}_H^*\TPf(\overline\nabla,\hat{\nabla})=\int_{W}\Pf(\overline\nabla^{H})-\int_{\partial W}\TPf(\overline{\nabla}^H,\hat{\nabla}^H)=\chi(W),
\end{equation}
where, in the last line we used Chern-Gauss-Bonnet \cite{Ch2} on the oriented, compact, \emph{flat} manifold with boundary $W$. The lines (\ref{parta}) and (\ref{partb}) finish the proof of this note.

\end{document}